\title{Cyclic groups are CI-groups for balanced configurations}
\author{Hiroki~Koike}
\address{H.~Koike, FAMNIT, University of Primorska, Glagolja\v ska 8, 
6000 Koper, Slovenia}
\email{hiroki.koike@upr.si} 
\author{Istv\'an~Kov\'acs}
\address{I.~Kov\'acs, IAM and FAMNIT, University of Primorska, Glagolja\v ska 8, 
6000 Koper, Slovenia}
\email{istvan.kovacs@upr.si}
\author{Dragan~Maru\v si\v c}
\address{D.~Maru\v si\v c, FAMNIT, University of Primorska, Glagolja\v ska 8, 
6000 Koper, Slovenia}
\email{dragan.marusic@upr.si}
\author{Mikhail~Muzychuk}
\address{M.~Muzychuk, Department of Computer Science and Mathematics, 
Netanya Academic College, University st. 1, 42365 Netanya, Israel}
\email{muzy@netanya.ac.il}
\thanks{
{\it 2010 Mathematics Subject Classification.} 20B10, 20B25. \\ 
\indent {\it Key words and phrases.} Cayley object, CI-property, cyclic group, balanced configuration. \\
The first three authors were supported in part by ARRS, the Slovenian Research Agency (research program P1-0285); the second and third 
authors thank ARRS (research projects N1-0032, N1-0038, J1-5433 and J1-6720); and the third author also thanks 
ARRS (I0-0035), and H2020 Teaming InnoRenew CoE. 
The fourth author thanks the University of Primorska for hospitality.
}
\def\B{\mathcal{B}}
\def\Z{\mathbb{Z}}
\def\K{\mathcal{K}}
\DeclareMathOperator{\aut}{Aut}
\DeclareMathOperator{\fix}{\sf Fix}
\newtheorem{thm}{Theorem}[section]
\newtheorem{lem}[thm]{Lemma}
\newtheorem{prob}[thm]{Problem}
\theoremstyle{definition}
\theoremstyle{remark}
\newtheorem{rem}[thm]{Remark}
\newcommand{\comment}[1]{}
\newcommand{\sg}[1]{\langle{#1}\rangle}
\begin{document}

\maketitle

\begin{abstract}
In this paper it is shown that every finite cyclic group satisfies the CI-property for the class of balanced configurations. 
\end{abstract}

\section{Introduction}

Let $G$ be a finite group. Following \cite{P87}, a \emph{Cayley object} 
of $G$ is a relational structure with underlying set $G$ which is invariant under all 
right translations $x \mapsto xg, x,g \in G$. Given a class $\K$ of Cayley objects of $G,$ the group $G$ is said to satisfy the \emph{CI-property} for $\K$ if for any two isomorphic Cayley objects in $\K,$ there exists an isomorphism between them 
which is in the same time an automorphism of $G$. 
The question whether a given finite group $G$ satisfies the CI-property for a given class 
$\K$ of Cayley objects has been extensively studied under various choices of $G$ and 
$\K$. P\'alfy \cite{P87} characterized the finite groups satisfying the CI-property for all Cayley objects. It turns out that these are the groups $\Z_2 \times \Z_2,$ $\Z_4$ and the cyclic groups $\Z_n$ with $\gcd(n,\varphi(n)) = 1,$ where $\varphi$ is the Euler 
function. A great deal of research was devoted to the CI-property of finite groups for 
digraphs, all which date back to \'Ad\'am's famous conjecture \cite[Problem~2-10]{A67}.  This was eventually answered in  \cite{M95,M97,M97b} where it was proven that the cyclic group $\Z_n$ satisfies the CI-property for digraphs if and only if its order is of the form $n=k,$ $2k,$ or $4k,$ where $k$ is an odd square-free number. For further information on arbitrary finite groups satisfying the 
CI-property for graphs and digraphs, we refer to the survey \cite{L02}.    

In this paper we expose a nontrivial and well studied class of combinatorial objects - the class of balanced configurations  \cite{G09,PS13} -  for which, perhaps surprisingly, every finite cyclic group satisfies the CI-property.  

First, an \emph{incidence geometry} $(\Omega,\B)$ consists of a set $\Omega$ of $n$ points and a collection $\B  \subseteq 2^\Omega$ of $b$ lines (or blocks) such that 
$|B \cap B'| \le 1$ for any $B,B' \in \B,$ $B \ne B'$. Then $(\Omega,\B)$ is called a \emph{configuration} of type $(n_r,b_k)$ (\emph{combinatorial configuration} in the sense of \cite{G09}) if the following hold:
\begin{itemize}
\item $| \{ B \in \B : \omega \in B \} | = r$ for every $\omega \in \Omega;$ and 
\item $|B| = k$ for every $B \in \B$ with $k \ge 3$. 
\end{itemize}
The \emph{automorphism group} $\aut X$ of the configuration 
$X=(\Omega,\B)$ consists of all permutations $g \in {\rm Sym}(\Omega)$ which leaves the set $\B$ invariant, that is, $\B^g = \B$.

A configuration with $n = b$ (and therefore $r = k$) is called \emph{balanced,} 
or a \emph{$k$-configuration,} and its type is simply denoted by $(n_k)$. 
In this paper we prove the following theorem.

\begin{thm}\label{main}
Every finite cyclic group satisfies the CI-property for balanced configurations.
\end{thm}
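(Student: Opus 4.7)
The plan is to invoke Babai's classical criterion, which rephrases the CI-property for a class $\K$ as follows: $\Z_n$ has the CI-property for $\K$ if and only if, for every $X \in \K$ that is a Cayley object of $\Z_n$, any two regular cyclic subgroups of order $n$ inside $G := \aut X$ are conjugate in $G$. Accordingly, fix a balanced $(n_k)$-configuration $X = (\Z_n, \B)$ which is a Cayley object, let $H \leq G$ be the standard regular cyclic subgroup, and let $C \leq G$ be another regular cyclic subgroup of order $n$; the goal becomes constructing $g \in G$ with $C^g = H$.

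The key structural ingredient is the rigidity encoded by $|B \cap B'| \leq 1$. Passing to base blocks $B_1, \dots, B_t$ of the $\Z_n$-action on $\B$, this is equivalent to the statement that the multiset of differences $\{b - b' : b \in B_i,\ b' \in B_j\}$ contains every nonzero element of $\Z_n$ at most once, for every pair $(i,j)$. Equivalently, the incidence graph of $X$ has girth at least $6$. This severely restricts both the combinatorics of $\B$ and the group $G$; in particular, it should rule out the ``blown-up'' or wreath-product-type obstructions that cause the CI-property to fail for circulant (di)graphs when $n$ is not of the Muzychuk form.

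The main technical step would be an induction on $n$. Let $M$ be a nontrivial subgroup of $\Z_n$ such that the corresponding partition $\pi$ of $\Z_n$ into $M$-cosets is preserved by both $H$ and $C$; such an $M$ is typically furnished by the kernel of the $G$-action on a suitable block system. Form the quotient configuration $X/\pi$, apply the induction hypothesis to obtain a conjugator in the quotient, and then lift. The girth condition forces every block of $\B$ either to lie inside a single fiber of $\pi$ or to meet each fiber in at most one point; this dichotomy is the crucial dividend of balancedness and is what should permit the lift from a quotient conjugator to an actual element of $G$. The base case is when no nontrivial common block system exists, which should reduce to $n$ prime and a direct appeal to the classical description of transitive groups of prime degree containing a regular cyclic subgroup. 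I expect the main obstacle to be the lifting step: even given the dichotomy, simultaneously controlling multiple base blocks (possibly with distinct stabilizer structures) and assembling a single conjugating permutation that lies in $G$, rather than merely in $\mathrm{Sym}(\Z_n)$, is where the balance condition must be deployed most delicately.
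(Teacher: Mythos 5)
Your opening move (Babai's criterion, plus the observation that $|B\cap B'|\le 1$ makes the incidence relation $K_{2,2}$-free) matches the paper, but the induction-on-quotients strategy you build on it has a genuine gap, and its central structural claim is false. You assert that the girth condition forces every line either to lie inside a single fiber of $\pi$ or to meet each fiber in at most one point. Take $n=9$, $M=\sg{3}=\{0,3,6\}$, and the base line $B=\{0,1,3\}$: the six differences $\pm 1,\pm 2,\pm 3$ are pairwise distinct, so the translates of $B$ form a valid cyclic $(9_3)$-configuration, yet $B$ meets the fiber $\{0,3,6\}$ in exactly two points without being contained in it. So the ``crucial dividend of balancedness'' on which your lifting step relies does not exist. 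The base case is also not what you claim: if $H$ and $C$ admit no common nontrivial block system you cannot conclude $n$ is prime --- a primitive group containing a regular cyclic subgroup of composite degree exists and is $2$-transitive (Burnside--Schur), and for configurations this case leads to projective planes (handled separately by Jungnickel), not to an empty case. Finally, the lifting step, which you yourself flag as the main obstacle, is exactly where CI-proofs for circulant objects usually break down, and no mechanism is offered to carry it out.

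The paper avoids all of this by working inside Muzychuk's framework rather than by induction on $n$: it replaces ``any two regular cyclic subgroups are conjugate'' by the statement that a $\prec$-minimal subgroup $G$ with $C\le G\le\aut X$ equals $C$, uses the fact that $\prec$-minimal c-groups are solvable, and then pins down the Fitting subgroup $F(G)$ via an analysis of the groups $\mathbf{O}_p(G)$ (Lemma~\ref{L4}), showing $\mathbf{O}_p(G)\le C$ for odd $p$ and that the only possible excess at $p=2$ is a dihedral/semidihedral/quaternion configuration that can be handled directly. The combinatorial input ($K_{2,2}$-freeness and connectivity) enters through Lemmas~\ref{L1} and~\ref{L2}, which control orbits of normal subgroups rather than the shape of individual lines. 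If you want to salvage your approach you would need a correct substitute for the false dichotomy and a treatment of the primitive/$2$-transitive case; as written, the argument does not go through.
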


\begin{rem}
It should be noted that some special cases of Theorem~\ref{main} have already been settled. 
Jungnickel \cite{J08} proved that finite abelian groups satisfy the 
CI-property for projective planes, while Koike et al.\ \cite{KKP14} proved 
Theorem~\ref{main} in the case when the cyclic group is of order a prime power or a product of two primes.
\end{rem}

In our proof of Theorem~\ref{main} we follow the approach used in \cite{M99}. 
Let $G$ be an arbitrary finite group, and $X$ be any Cayley object of $G$. We say that $X$ is a \emph{CI-object} if 
whenever $X$ is isomorphic to another Cayley object $Y$ of $G,$ there exists 
an isomorphism between $X$ and $Y$ which is at the same time an automorphism 
of $G$. Babai \cite{B77} gave the following group-theoretic reformulation of the 
CI-property: $X$ is a CI-object if and only if  the regular subgroups of $\aut X,$ which are isomorphic to $G$, form a single conjugacy class of subgroups in $\aut X$.

Following Li~\cite{L05}, we say that $G\leq{\rm Sym}(\Omega)$ is a \emph{c-group} if it contains a regular cyclic subgroup (or a full cycle). 
In \cite{M99}, the \emph{partial order $\prec$} on a set of c-subgroups  was defined as follows: for c-subgroups $A,B \leq {\rm Sym}(\Omega),$ let $A \prec B$ if 
$A \le B,$ and  for every regular cyclic subgroup $L \le B,$ $L^b \le A$ holds for 
some $b \in B$ (here and in what follows $L^b$ is the conjugate subgroup $b^{-1} L b$). 

We say that a c-group $G$ is \emph{$\prec$-minimal} if it is minimal with respect to the partial order $\prec$.  In this context Babai's criterion says 
that, a cyclic object $X$ with a regular cyclic subgroup 
$C \le \aut X$ is a CI-object if and only if $C$ is the unique $\prec$-minimal subgroup of $\aut X$. Since we can restrict ourselves to connected configurations only (see Section 2 for a detailed discussion),
Theorem~\ref{main} is a corollary of the following statement.

\begin{thm}\label{main2}
Suppose that $X=(\Omega,\B)$ is a connected balanced configuration and 
$C \le \aut X$ is a cyclic subgroup which is regular on $\Omega$. Let $C \le G \le \aut X$ be a $\prec$-minimal group. Then $G=C$. 
\end{thm}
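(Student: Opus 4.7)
The plan is to assume for contradiction that $G$ strictly contains $C$, and to exhibit a proper c-subgroup $H < G$ such that every regular cyclic subgroup of $G$ has a conjugate inside $H$; this would contradict the $\prec$-minimality of $G$. The overall strategy mirrors that of \cite{M99}: reduce via block systems and induction on $|\Omega|$, using the balanced-configuration structure to control the quotient at each step.

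First I would analyse the systems of imprimitivity of $G$ on $\Omega$. Since $C$ is a regular cyclic subgroup of $G$, every $G$-invariant partition of $\Omega$ arises from a subgroup of $C$, so the lattice of $G$-block systems is a sublattice of the divisor lattice of $n = |\Omega|$. Pick a minimal nontrivial $G$-block system $\mathcal{D}$ and let $K$ be the kernel of the induced $G$-action on $\mathcal{D}$. Then $K \trianglelefteq G$, $CK/K$ is a regular cyclic subgroup of $G/K$, and the action of $G/K$ on $\mathcal{D}$ is again a c-group action on a smaller set. The aim is to induct on $|\Omega|$.

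Next I would construct a suitable quotient incidence structure $X/\mathcal{D}$: because $K$ is normal in $G$ and $\mathcal{D}$ consists of $K$-orbits on $\Omega$, the $K$-orbits on $\B$ can be used to produce a natural incidence between $\mathcal{D}$ and $\B/K$. Here I would need to verify that, under the combined assumptions that $X$ is connected and balanced with $k\ge 3$, this quotient is still a (possibly degenerate) balanced configuration, or can be replaced by one whose automorphism group still contains $G/K$. The balanced/self-dual nature of the Levi graph should play a decisive role in ensuring that points and blocks collapse symmetrically. Once this is set up, applying the induction hypothesis to $(X/\mathcal{D},\,G/K,\,CK/K)$ yields $G/K = CK/K$, i.e.\ $G = CK$, reducing the problem to the analysis of the kernel $K$. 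Using connectivity together with the fact that every regular cyclic subgroup of $G = CK$ is conjugate, in $G$, to one lying in $C \cdot (K \cap C_{\aut X}(\text{block}))$, one should then be able to force $K \le C$ and hence $G = C$.

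The main obstacles I anticipate are twofold. The first is the primitive base case: when $G$ admits no nontrivial block system, the induction breaks down and one must invoke the classification of primitive c-groups (e.g.\ $\Z_p$, $S_n$, $A_n$, or projective groups on points of $\mathrm{PG}_{d-1}(q)$) and rule each out as the automorphism group of a connected balanced configuration containing a strictly larger-than-cyclic regular c-group — here the rigidity coming from $k \ge 3$ and $|B \cap B'| \le 1$ is essential. The second, and technically more delicate, difficulty is ensuring that the quotient configuration $X/\mathcal{D}$ does not degenerate in a way that invalidates the inductive hypothesis, for example when blocks of $X$ are merged to blocks of size less than $3$ or when point-block incidence becomes repeated in the quotient; such degenerate quotients will likely require a separate, direct argument exploiting the duality of balanced configurations and the prior results of Jungnickel \cite{J08} and Koike et al.\ \cite{KKP14} as base cases.
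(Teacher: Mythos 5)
Your proposal is a programme rather than a proof: the two ``main obstacles'' you list at the end are exactly the hard parts of the argument, and neither is resolved. The most serious gap is the quotient step. A quotient of a balanced configuration by the orbits of a normal subgroup $K$ is in general \emph{not} a balanced configuration: distinct $K$-orbits of lines can become incident with the same set of point-blocks, the images of lines can meet in more than one point (so $K_{2,2}$-freeness fails), and block sizes can drop below $3$; you acknowledge this degeneration but offer no repair beyond an appeal to ``duality'' and to \cite{J08,KKP14}, which do not cover it. There is also a mismatch in the induction itself: Theorem~\ref{main2} is a statement about $\prec$-minimal subgroups, and even if $X/\mathcal{D}$ were a connected balanced configuration, $G/K$ need not be $\prec$-minimal in its automorphism group, nor does every regular cyclic subgroup of $G/K$ on $\mathcal{D}$ lift to a regular cyclic subgroup of $G$ on $\Omega$ --- so conjugacy downstairs does not transfer upstairs, and the final step ``force $K\le C$'' is asserted without argument. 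Finally, your primitive base case would require the CFSG-based classification of primitive c-groups together with a separate elimination of each candidate ($S_n$, $A_n$, projective groups, \dots) as automorphism groups of connected balanced configurations; none of that is supplied.

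The paper takes a genuinely different and more economical route that avoids all three difficulties: since a $\prec$-minimal group is solvable (\cite[Theorem~1.8]{M99}), one can analyse the Fitting subgroup $F(G)$ directly. The $K_{2,2}$-freeness of the incidence relation, applied through Lemmas~\ref{L1} and \ref{L2}, forces $\mathbf{O}_p(G)\le C$ for odd $p$ and pins $\mathbf{O}_2(G)$ down to a short list of dihedral/semidihedral/quaternion possibilities (Lemma~\ref{L4}); the inclusion $C=C_G(C)\le C_G(F(G))\le F(G)$ then either gives $F(G)=C$ at once or reduces to a small case analysis in which all regular cyclic subgroups are shown to be conjugate. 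No quotient configuration is ever formed and no primitive case arises. If you want to salvage your approach, you would need to replace the quotient configuration by the quotient of the incidence \emph{relation} $R$ (which stays $K_{2,2}$-free only under additional hypotheses you would have to establish), and to formulate an inductive statement stable under passing to $G/K$; as written, the argument does not go through.
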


 In Section~2 we 
give a number of lemmas which will be used in the proof of Theorem~\ref{main2} in Section~3.

\section{Preparatory lemmas}

Throughout the paper we keep the notation set in Theorem~\ref{main2}. 
Furthermore, we denote by $D_m,$ $SD_m$ and $Q_m$ the dihedral, semidihedral and 
generalized quaternion group of order $m,$ respectively (see, e.\ g., \cite{B08}). 
Clearly, the cyclic group $C$ in Theorem~\ref{main2} is of order $|C|=|\Omega|$. 
We let $n=|\Omega|$. Let $n_p$ denote the largest $p$-power dividing $n,$ where $p$ is a prime. For a divisor $d$ of $n,$ let $C_d$ be the subgroup of 
$C$ of order $d,$ and let $C_{p'}=C_{n/n_p}$. 
\medskip

Define the relation $R \subset \Omega \times \B$ by letting $(\omega,B) \in R$ 
if and only if $\omega \in B$. The corresponding 
undirected graph induced on $\Omega \cup \B$ with edge set $R \cup R^t$ is also known as the \emph{incidence graph} of $X$.  By $R^t$ we denote the relation 
transpose to $R,$ that is, $R^t=\{ (\alpha,\beta) : (\beta,\alpha) \in R \}$.
It is easily seen that $R$ is $\aut X$-invariant, 
and that there are no points $\alpha, \beta \in \Omega$ and lines 
$\gamma,\delta \in \B$ for which $\{\alpha,\beta\} \times \{\gamma,\delta\} \subseteq R$. In what follows we shall refer to the latter property of $R$ by saying that $R$ is \emph{$K_{2,2}$-free}.

For a subgroup $K \le \aut X,$ we denote by $\Omega/K$ the set 
of $K$-orbits of its  action on $\Omega,$ and by 
$\B/K$ the set of $K$-orbits of its action on $\B$. 

\begin{lem}\label{L0} Let $(\Omega,\B)$ be a balanced configuration 
of type $(n_k)$. If $C \leq \aut X$ is a cyclic subgroup which acts regularly on $\Omega$ then it acts regularly on $\B$ too.
\end{lem}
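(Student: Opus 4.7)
The plan is to show that $C$ is transitive on $\B$, since $|\B| = n = |C|$ for a balanced configuration of type $(n_k)$ forces transitivity to upgrade to regularity automatically. So it suffices to prove that $C$ has a single orbit on $\B$.

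Consider an arbitrary $C$-orbit $\mathcal{O}$ on $\B$ of size $m$. As $C$ is abelian, every $B \in \mathcal{O}$ shares the common stabilizer $H \le C$ of order $d = n/m$. A standard double-counting argument inside $\mathcal{O}$ shows that each point $\omega \in \Omega$ lies on exactly $mk/n = k/d$ blocks of $\mathcal{O}$. Moreover, $H$ acts semiregularly on $\Omega$ (through its inclusion in the regular group $C$) with all orbits of size $d$, and each $B \in \mathcal{O}$ is $H$-invariant, hence a disjoint union of $H$-orbits on $\Omega$; this forces $d \mid k$.

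The $K_{2,2}$-free property of the incidence relation $R$ then sharply restricts $d$: pick $\omega \in B$ and $h \in H \setminus \{1\}$. Then $h\omega \neq \omega$ lies in $B$, and since every block of $\mathcal{O}$ through $\omega$ is $H$-invariant, it also contains $h\omega$. But two distinct blocks meet in at most one point, so at most one block of $\mathcal{O}$ can contain both $\omega$ and $h\omega$, giving $k/d \le 1$. Combined with $d \mid k$, this forces $d \in \{1, k\}$.

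To finish, note that if $d = 1$ then $\mathcal{O}$ is regular of size $n$ and contributes $k$ blocks through each point, whereas if $d = k$ then each $B \in \mathcal{O}$ is a single $H$-orbit, that is, a coset of the unique subgroup of $C$ of order $k$; in that case $\mathcal{O}$ is the set of all $n/k$ such cosets and contributes exactly one block per point. Since each point lies on exactly $r = k$ blocks in total, the contributions $k/d$ from the various orbits must sum to $k$. A single regular orbit does this, whereas the coset-type orbit would leave $k-1$ to be supplied in multiples of $k$ by the remaining regular orbits—impossible for $k \ge 3$. Hence $C$ has a single orbit on $\B$, which is regular. The only nontrivial step is extracting from the innocuous-looking $K_{2,2}$-free condition the strong dichotomy $d \in \{1, k\}$; the rest is bookkeeping with point-block incidences.
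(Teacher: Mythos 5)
Your proposal is correct and rests on the same key mechanism as the paper's proof: a $C$-orbit of lines with nontrivial stabilizer $H\le C$ consists of $H$-invariant lines, and the axiom $|B\cap B'|\le 1$ then forces that orbit to be the coset partition $\Omega/C_k$, which can occur for at most one orbit. The only cosmetic difference is the finish — you count contributions to the replication number $r=k$, while the paper notes directly that two non-regular orbits would both have to equal $\Omega/C_k$ — but this is the same argument in substance.
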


\begin{proof} We have to show that $C$ is transitive on $\B$. Assume  the contrary, that is, $C$ has more than one orbit on $\B$. Let $\B_1$ be one of them. Then $n_1:=|\B_1| < |\B|=n=|C|$ implying that a subgroup $C_{n/n_1}$ fixes each line $B \in \B_1$ setwise. Therefore each line from $\B_1$ is a union of $C_{n/n_1}$-orbits. In particular, if two lines of $\B_1$ intersect nontrivially, then they contain at least two common points. Since the relation $R$ is $K_{2,2}$-free, such intersection could never occur, and so $\B_1$ is a partition of 
$\Omega$. Hence $k n_1 = n,$ implying 
that each line $B\in \B_1$ is just a unique  $C_k$-orbit. Hence $\B_1 = \Omega/C_k$. Arguing in the same way for another $C$-orbit, 
$\B_2$ say, we conclude $\B_2 = \Omega/C_k = \B_1,$ a 
contradiction.
\end{proof}

Let us recall that the configuration $(\Omega,\B)$ in the above lemma 
is called \emph{connected} if its incidence graph is connected.
It follows from the above statement that the connected components of the incidence graph are permuted transitively by the cyclic group $C$. Thus the components are pairwise isomorphic, and, moreover, each of them is a balanced cyclic configuration by itself.  

Since the configurations are isomorphic if and only if their connected components are isomorphic, this observation reduces the general case to the one with a connected incidence graph. In the rest of the paper we shall assume that the relation $R\cup R^t$ is connected. 

\begin{lem}\label{L1} 
With the notation of Theorem~\ref{main2}, let $N$ be a normal subgroup of $G$ such that  $|\Omega/N|=|\B/N|$. 
Then $N$ acts faithfully on its orbits on both $\Omega$ 
and $\B$. In particular, $G$ acts faithfully on both $\Omega$ 
and $\B$.
\end{lem}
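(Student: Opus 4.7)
The plan is to introduce, for each orbit $\Omega_i$ of $N$ on $\Omega$, the pointwise stabilizer $K_i=\{n\in N:n|_{\Omega_i}={\rm id}\}\trianglelefteq N$, and analogously $L_j\trianglelefteq N$ for each $N$-orbit $\B_j$ on $\B$. The goal is to show $K_i=L_j=1$ for all $i,j$. One has $\bigcap_i K_i=1$ automatically, since $N\leq{\rm Sym}(\Omega)$; thus it will suffice to show that all $K_i$ coincide, and likewise for the $L_j$.

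The crux is to prove the identity $K_i=L_j$ whenever the intersection number $\mu_{ij}:=|B\cap\Omega_i|$ (for any $B\in\B_j$, constant by $N$-invariance) is positive. A brief double-count using the hypothesis $|\Omega_i|=|\B_j|=n/s$ yields $\mu_{ij}=\lambda_{ij}$, where $\lambda_{ij}$ counts blocks of $\B_j$ through a point of $\Omega_i$. To establish $K_i\leq L_j$ I take $g\in K_i$ and $B\in\B_j$: if $\mu_{ij}\geq 2$, the two or more fixed points in $B\cap\Omega_i$ give $|B\cap g(B)|\geq 2$, and the $K_{2,2}$-free property forces $g(B)=B$; if $\mu_{ij}=1$, writing $B\cap\Omega_i=\{\omega\}$, the image $g(B)$ still lies in $\B_j$ (as $N$ stabilises $\B_j$) and passes through $g(\omega)=\omega$, while $\lambda_{ij}=1$ says only one such block exists, so $g(B)=B$. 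The reverse inclusion $L_j\leq K_i$ is proved by the dual argument, swapping the roles of points and blocks.

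Next I invoke connectedness: the incidence graph of $X$ is connected by assumption, and so is its $N$-quotient, the bipartite graph on $\{\Omega_1,\dots,\Omega_s\}\cup\{\B_1,\dots,\B_s\}$ with $\Omega_i\sim\B_j$ iff $\mu_{ij}\geq 1$. Any path $\Omega_{i_0}-\B_{j_1}-\Omega_{i_1}-\cdots-\Omega_{i_t}$ in the quotient yields a chain of equalities $K_{i_0}=L_{j_1}=K_{i_1}=\cdots=K_{i_t}$, so all $K_i$ coincide; combined with $\bigcap_i K_i=1$ this forces $K_i=1$, and symmetrically $L_j=1$. The ``In particular'' statement is then the special case $N=G$, in which Lemma~\ref{L0} makes $|\Omega/G|=|\B/G|=1$ automatic.

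The step I expect to require the most care is the $\mu_{ij}=1$ subcase of the key claim, where the $K_{2,2}$-free property alone is insufficient to force $g(B)=B$ (only one fixed point lies on $B$); one must additionally exploit that $N$ preserves both $\Omega_i$ and $\B_j$, so that the unique block of $\B_j$ through a given point of $\Omega_i$ identifies $g(B)$ with $B$. This is precisely the place where the orbit-count hypothesis $|\Omega/N|=|\B/N|$ is indispensable, via $\mu_{ij}=\lambda_{ij}$.
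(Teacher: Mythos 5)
Your proposal is correct and follows essentially the same route as the paper: equate the pointwise stabilizers of an incident pair of $N$-orbits by combining the equal orbit lengths (forced by $|\Omega/N|=|\B/N|$ and normality of $N$ in the transitive group $G$) with a flag double count and the $K_{2,2}$-free property, then propagate along the connected quotient incidence graph to conclude all these stabilizers coincide and hence are trivial. The only cosmetic difference is that the paper packages the key step as a contradiction producing an explicit $K_{2,2}$ (which automatically puts it in your $\mu_{ij}\ge 2$ case), whereas you argue the equality $K_i=L_j$ directly with a two-case split on $\mu_{ij}$.
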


\begin{proof} Notice that the assumptions of Therorem~\ref{main2} and Lemma~\ref{L0} guarantee that $G$ is transitive on $\Omega$ and $\B$.

Pick an arbitrary pair $(\delta_1,\delta_2)\in R$ and consider the $N$-orbits
$\Delta_i=\delta_i^N, \, i=1,2$. Let $N_{\Delta_i}$ be the pointwise stabilizer of 
the set $\Delta_i$ in $N$. We claim that $N_{\Delta_1}=N_{\Delta_2}$. Indeed, if these subgroups are distinct, then we may assume without loss of generality that $N_{\Delta_2} \not\leq N_{\Delta_1}$. Hence $N_{\Delta_2}$ acts on $\Delta_{1}$ nontrivially. Since $N_{\Delta_i},\, i=1,2,$ are normal in $N$, the orbits of $N_{\Delta_2}$ on $\Delta_{1}$ have the same size greater than $1$. There exists $g_{2} \in N_{\Delta_{2}}$ such that $\delta_{1}^{g_2}\neq \delta_1$. 
Therefore $|R^t(\delta_2)\cap \Delta_1|\geq 2$. It follows from 
$$
|\Delta_1|=\frac{|\Omega|}{|\Omega/N|} = \frac{|\B|}{|\B/N|} =|\Delta_2|
$$
that $|R(\delta_1)\cap\Delta_2|=|R^t(\delta_2)\cap\Delta_1| \ge 2$. Let $\delta'_2 \neq \delta_2$ be a point from $R(\delta_1) \cap \Delta_2$. Then 
$$
(\delta_1,\delta_2), (\delta_1,\delta'_2)\in R\implies 
\{(\delta_1,\delta_2),(\delta_1,\delta'_2),
(\delta_1,\delta_2)^{g_2}, (\delta_1,\delta'_2)^{g_2}\}\subseteq R\implies
$$
$$ 
\{(\delta_1,\delta_2),(\delta_1,\delta'_2),
(\delta_1^{g_2},\delta_2), (\delta_1^{g_2},\delta'_2\} = 
\{\delta_1,\delta_1^{g_2}\}\times \{\delta_2,\delta'_2\}\subseteq R.
$$
This contradicts the fact that $R$ is $K_{2,2}$-free. Therefore, $N_{\delta_1^N}=N_{\delta_2^N}$ holds for any pair $(\delta_1,\delta_2)\in R$. By connectivity of $R \cup R^t$ we obtain that $N_{\Delta_1} = N_{\Delta_2}$ 
for any pair of $N$-orbits. Hence $N_{\Delta_i}$ is trivial.  
\end{proof}

\begin{lem}\label{L2}
With the notation of Theorem~\ref{main2}, for any $(\alpha,\beta) \in R$ and arbitrary nontrivial $F \leq C$ it holds that $\beta^F \not\subseteq \beta^{G_\alpha}$ and 
$\alpha^F \not\subseteq \alpha^{G_\beta}$. 
\end{lem}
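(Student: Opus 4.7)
The plan is to derive a contradiction with the $K_{2,2}$-freeness of $R$ in each case, by first showing that both hypotheses collapse to the same geometric statement
\[
\alpha^F \subseteq \beta.
\]

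Suppose first that $\beta^F \subseteq \beta^{G_\alpha}$. For each $f \in F$ one has $\beta^f = \beta^g$ for some $g \in G_\alpha$; since $g$ fixes $\alpha$ and $\alpha \in \beta$, this forces $\alpha = \alpha^g \in \beta^g = \beta^f$. Rewriting $\alpha \in \beta^f$ as $\alpha^{f^{-1}} \in \beta$ and using $F = F^{-1}$ yields $\alpha^F \subseteq \beta$. Suppose instead that $\alpha^F \subseteq \alpha^{G_\beta}$. Then for every $f \in F$ one has $\alpha^f = \alpha^g$ with $g \in G_\beta$; since $g$ fixes the line $\beta$ setwise and $\alpha \in \beta$, we get $\alpha^f = \alpha^g \in \beta^g = \beta$, so once again $\alpha^F \subseteq \beta$.

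Once $\alpha^F \subseteq \beta$ is in hand, I would exploit the fact that $F \leq C$ is abelian to conclude that $\alpha^F$ is $F$-invariant as a set. Applying an arbitrary $f \in F$ to the inclusion gives
\[
\alpha^F = (\alpha^F)^f \subseteq \beta^f,
\]
and therefore $\alpha^F \subseteq \beta \cap \beta^f$.

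To finish, recall that $C$ is regular on $\Omega$ by the hypothesis of Theorem~\ref{main2} and regular on $\B$ by Lemma~\ref{L0}; consequently every nontrivial subgroup of $C$, in particular $F$, acts semiregularly on both sets. Thus for any $f \in F \setminus \{1\}$ we have $\beta^f \neq \beta$ and $|\alpha^F| = |F| \geq 2$, whence $|\beta \cap \beta^f| \geq 2$, contradicting the $K_{2,2}$-freeness of $R$. I do not expect any serious obstacle: the only mildly non-obvious step is recognising the symmetric reduction of both halves of the lemma to the single containment $\alpha^F \subseteq \beta$; once this is seen, abelianness of $F$ and Lemma~\ref{L0} deliver the contradiction almost immediately.
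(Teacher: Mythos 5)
Your proof is correct and uses essentially the same mechanism as the paper: translate the incidence data by the semiregular group $F$ and contradict the $K_{2,2}$-freeness of $R$ (the paper gets $\alpha^F\times\beta^F\subseteq R$ directly, you get the equivalent $\alpha^F\subseteq\beta\cap\beta^f$). Your unification of both halves into the single containment $\alpha^F\subseteq\beta$ is a tidy reorganization, but not a genuinely different argument.
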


\begin{proof} 
If $\beta^F \subseteq \beta^{G_\alpha}$, then $(\alpha,\beta^F) \subseteq R$ implies that $(\alpha,\beta^F)^F\subseteq R,$ hence $(\alpha^F,\beta^F) \subseteq R,$ 
a contradiction. The proof of $\alpha^F \not\subseteq \alpha^{G_\beta}$ is analogous.
\end{proof} 

\section{The proof of Theorem~\ref{main2}}

We keep the notation of Theorem~\ref{main2}. 
The key step will be to describe the Fitting subgroup $F(G)$.
Since $G$ is a $\prec$-minimal group, it is solvable (see \cite[Theorem~1.8]{M99}). Therefore, to get a description
of $F(G)$ we first analyse the maximal normal $p$-subgroups 
${\bf O}_p(G),$ where $p$ runs over the set of all prime divisors of $|G|$. 

\begin{lem}\label{L4} 
With the notation of Theorem~\ref{main2}, let $p$ be  a prime divisor of $|G|$ such that 
$\mathbf{O}_p(G) \not\leq C$. Then $p=2,$ $n_2 \ge 8,$ and 
\begin{enumerate}[(i)]
\item $\mathbf{O}_2(G) \cong D_{2n_2}$ or $SD_{2n_2},$  and 
$C_{n_2} < \mathbf{O}_2(G);$ or 
\item $\mathbf{O}_2(G) \cong D_{n_2}$ or $Q_{n_2}$,  
 $\mathbf{O}_2(G)C_{n_2} \cong D_{2n_2}$ or $SD_{2n_2},$ and 
$|C \cap \mathbf{O}_2(G)|=\frac{1}{2}n_2$. 
\end{enumerate}  
\end{lem}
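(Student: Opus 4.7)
Set $P := \mathbf{O}_p(G)$ and suppose $P \not\le C$. The argument proceeds in three main stages.

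\emph{Step 1 (Faithful $P$-action on orbits).} First I would show that $|\Omega/P| = |\B/P|$, so that Lemma~\ref{L1} applies. Since $C \le G$ is regular on both $\Omega$ and $\B$ and normalises $P$, the $P$-orbits on each side are blocks of imprimitivity for the cyclic group $C$ and are therefore each parametrised by a cyclic subgroup of $C$. Equality of orbit sizes on the two sides follows from a flag-counting argument that exploits the $K_{2,2}$-freeness of $R$: for $(\alpha,\beta)\in R$ the orbit $(\alpha,\beta)^P$ projects onto $\alpha^P$ and $\beta^P$ with fibre sizes $|\beta^{P_\alpha}|$ and $|\alpha^{P_\beta}|$ respectively, and $K_{2,2}$-freeness forces these to match. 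With this in hand, Lemma~\ref{L1} yields that $P$ acts faithfully on each of its orbits. Put $\Delta:=\alpha^P$, $|\Delta|=p^c$, let $H\le C$ be the unique subgroup of order $p^c$ (which acts regularly on $\Delta$), and write $|P\cap C|=p^a$, so $P\cap C\le H$, hence $a\le c\le b$ where $p^b=n_p$.

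\emph{Step 2 (Structural reduction).} The structural heart of the proof is to establish (a) that $P$ contains a cyclic subgroup of index at most $p$, and (b) that $a\ge b-1$. Both should follow from the $\prec$-minimality of $G$ combined with the coprime action of $C_{p'}\le C$ on the $p$-group $P$: failure of either would let one extract a proper c-subgroup of $G$ capturing all regular cyclic subgroups of $G$ up to conjugacy, violating minimality. Once (a) is established, the classification of $p$-groups with a cyclic subgroup of index $p$ confines $P$ to: cyclic or modular maximal-cyclic $M$ for odd $p$; and cyclic, $D$, $SD$, $Q$, or $M$ for $p=2$.

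\emph{Step 3 (Finishing).} For odd $p$, cyclic $P$ yields $P\le C$, a contradiction, while $P\cong M_{p^{c+1}}$ is ruled out by applying Lemma~\ref{L2} with a suitable subgroup $F\le H$ of order $p$: the order-$p$ element of $P_\alpha$ in the modular structure acts on lines through $\alpha$ in a manner that, combined with $K_{2,2}$-freeness, forces $\beta^F\subseteq\beta^{G_\alpha}$. Hence $p=2$. For $p=2$, the case $a=c=b$ gives $|P|=2n_2$ with $|P_\alpha|=2$; faithfulness excludes cyclic $C_{2n_2}$ and $Q_{2n_2}$ (the only involution is central and would act trivially on $\Delta$), and a further Lemma~\ref{L2} application rules out $M_{2n_2}$, leaving $P\cong D_{2n_2}$ or $SD_{2n_2}$, i.e.\ case~(i). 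The case $a=b-1$ gives $|P|=n_2$ with $P\cap C$ a cyclic subgroup of index $2$; reapplying the analysis to the overgroup $PC_{n_2}$ of order $2n_2$ (which fits case~(i)) reduces $P$ to $D_{n_2}$ or $Q_{n_2}$ and forces $PC_{n_2}\cong D_{2n_2}$ or $SD_{2n_2}$, i.e.\ case~(ii). The bound $n_2\ge 8$ is then automatic, since $SD_{2m}$ requires $2m\ge 16$ and $Q_m$ requires $m\ge 8$; for smaller $n_2$ only cyclic options survive, contradicting $P\not\le C$.

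\emph{Main obstacle.} The hardest step is the structural reduction in Step~2, which requires a careful interplay between the $\prec$-minimality of $G$, the coprime $C_{p'}$-action on $P$, and the $K_{2,2}$-freeness of $R$; the equality $|\Omega/P|=|\B/P|$ in Step~1 is a secondary technical point that also draws on the bipartite symmetry enforced by $K_{2,2}$-freeness. A further obstacle lies in the case-by-case combinatorial elimination of the modular (odd and even $p$), quaternion (case~(i)), and semidihedral (case~(ii)) options, each demanding a specifically tailored instance of Lemma~\ref{L2} against the permutation action of the candidate group on $\Delta$.
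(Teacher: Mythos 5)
The skeleton of your plan (reduce to $p=2$, pin $P=\mathbf{O}_p(G)$ down to a $2$-group with a cyclic subgroup of index $2$, then split into cases (i) and (ii) according to $[P:P\cap C_{n_2}]$) matches the shape of the statement, but the decisive step is missing. Your Step~2 asserts that ``$P$ contains a cyclic subgroup of index at most $p$'' and ``$a\ge b-1$'' \emph{should follow} from the $\prec$-minimality of $G$ together with the coprime action of $C_{p'}$ on $P$, with no actual argument. This is exactly the heart of the lemma, and the proposed mechanism cannot carry it: $\prec$-minimality is never used in the paper's proof of this lemma, and minimality alone cannot force $\mathbf{O}_p(G)\le C$ for odd $p$ --- for general cyclic Cayley objects (e.g.\ digraphs over $\Z_{p^2}$, where the CI-property fails) a $\prec$-minimal overgroup of $C$ may well have $\mathbf{O}_p(G)\not\le C$. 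The work must therefore be done by the configuration structure, and in the paper it is: one first shows that the orbits of $PC_{p^e}$ on both $\Omega$ and $\B$ coincide with the $C_{p^e}$-orbits, then picks $N=\sg{C_{p^e},a'}$ of order $p^{e+1}$ with $a'$ of order $p$ acting as $x\mapsto x^m$ on $C_{p^e}$. In the abelian and modular cases $m\in\{1,1+p^{e-1}\}$ --- which exhaust all possibilities when $p$ is odd, and also when $p=2$, $e\le 2$ --- the order-$p$ subgroups of $N$ generate an elementary abelian $Q$ of order $p^2$, so every $N_\omega$-orbit of size $p$ is a $C_p$-orbit; Lemma~\ref{L2} then forbids any incidence between $\fix(N_\omega)$ and its complement, contradicting connectedness of the incidence graph. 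This connectivity/fixed-point argument is the real content of the lemma and appears nowhere in your plan; your Step~3 invocations of Lemma~\ref{L2} against the modular cases are in the right spirit, but they come only after the unproven Step~2, and they do not supply the connectedness contradiction that makes the elimination work.

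A few secondary problems. Your Step~1 claim that $|\Omega/P|=|\B/P|$ follows from flag counting plus $K_{2,2}$-freeness is unjustified (the computation only gives $|\alpha^{P_\beta}|=p^{e-f}\,|\beta^{P_\alpha}|$, and a single element of $P_\beta$ moving $\alpha$ does not by itself produce a $K_{2,2}$); the paper avoids this by using Lemma~\ref{L1} only through the faithfulness of $G$ on $\B$ (to get $f>0$) and then working with $PC_{p^e}$, whose orbits on both sides are shown to equal the $C_{p^e}$-orbits. ``Cyclic $P$ yields $P\le C$'' also needs an argument: a cyclic normal $p$-subgroup acting regularly on its orbits need not coincide with $C_{p^c}$ a priori. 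Finally, $n_2\ge 8$ is not ``automatic'' from the list of isomorphism types: $D_8$ with $C_4<D_8$ is a non-cyclic candidate for case (i) with $n_2=4$, so ruling out $e\le 2$ requires the same combinatorial elimination, not just inspection of which $D$, $SD$, $Q$ exist.
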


\begin{proof} Fix a prime divisor $p$ of $|G|$ with nontrivial 
$\mathbf{O}_p(G)$. 
In what follows we abbreviate $P= \mathbf{O}_p(G)$. Let $p^e$ and $p^f$ be the sizes of $P$-orbits on $\Omega$ and $\B$ respectively. By the symmetric role of $\Omega$ and $\B,$ 
we may assume that $e \geq f$. If $f=0$, then $P$ acts trivially on $\B$ implying, by Lemma~\ref{L1}, that 
$\mathbf{O}_p(G)=\{1\}$. Thus we may assume that $f > 0$, which, in turn, implies $p\,|\, n$.

Let us consider the group $P C_{p^e}$. The orbits of $P C_{p^e}$ on $\Omega$ 
have cardinality $p^e$ and coincide with the orbits of $C_{p^e}$.  We claim that the same holds for the orbits of $P C_{p^e}$ on $\B$. 
Indeed, $C_{p^e}$ permutes the orbits of $P$ on $\B$, and the kernel of the action of $C_{p^e}$ on $\B/P$ is a subgroup of order $p^f$. Thus an orbit of $P C_{p^e}$ is a union of at most $p^{e-f}$  $P$-orbits. Therefore, the upper bound for the size of a $P C_{p^e}$-orbit is $p^e$. Finally, this yields that the size of a $P C_{p^e}$-orbit on $\B$ is equal to $p^e,$ and thus it must be equal to a $C_{p^e}$-orbit.   
Since $P \not\le C,$ it follows that $P C_{p^e} >  C_{p^e}$.  

We prove next that $p=2,$ $e \ge 3$ and 
\begin{equation}\label{PP1}
P C_{2^e} = \langle C_{2^e}, a \rangle = C_{2^e} \rtimes \langle a \rangle,
\end{equation}
where $a$ is an involution that acts on $C_{2^e}$ in such a way that for every $x \in C_{2^e},$ 
$x^a=x^{-1}$ or $x^a=x^{-1+2^{e-1}}$.

Since $C_{p^e} < P C_{p^e},$ we can choose a subgroup $C_{p^e} < N \le P C_{p^e}$ for which $N = \langle C_{p^e}, a' \rangle$ of order $p^{e+1},$ $a'$ is of order $p,$ $N \cong \Z_{p^e} \rtimes \Z_p,$ 
and $a'$ acts on $C_{p^e}$ by the rule $x^{a'} = x^{m}, x \in C_{p^e}$ where $m\in\{1,1+p^{e-1}\}$ if $p > 2$ and $m\in\{\pm 1,\pm 1 + p^{e-1}\}$ if $p=2$.
Notice that, in the case when $m=1,$ the semidirect product is equal to the direct product $N = C_{p^e} \times \langle  a' \rangle$.

Let us consider first the case when $m\in\{1,1+p^{e-1}\}$. 
For $m=1+p^{e-1}$ the group is also called the modular group of order 
$p^{e+1},$ and denoted by $M_{p^{e+1}}$.
In either case, the subgroups of $N$ of order 
$p$ generate an elementary abelian normal subgroup of order $p^2$. 
Let us denote this subgroup by $Q$.  Below we consider $N$ acting on 
$\Omega \cup \B$. 
Now, $C_{p^e}$ is transitive on every $N$-orbit, hence 
$N = N_\omega C_{p^e}$ for any $\omega \in \Omega \cup \B,$ 
$|N_\omega|=p$, and thus any orbit of $N_\omega$ has cardinality $1$ or $p$. Let us fix an arbitrary element 
$\omega \in \Omega \cup \B,$ and let  $\alpha \in \Omega \cup \B$ be such that $\alpha^{N_\omega} \neq \{\alpha\}$. 
Then $N_\alpha \neq N_\omega,$ hence $N_\alpha N_\omega = Q = N_\alpha C_p,$ and therefore, 
$$
\alpha^{N_\omega} = \alpha^{N_\alpha N_\omega} = \alpha^{N_\alpha C_p} =\alpha^{C_p}.
$$
We have shown that every $N_\omega$-orbit of size $p$  coincides with some $C_p$-orbit. Set $\Delta=\fix(N_\omega):=\{\alpha \in \Omega \cup \B : \alpha^{N_\omega}=\{\alpha\} \},$ let  
$\Delta_1=\Delta \cap \Omega$ and 
$\Delta_2=\Delta \cap \B$. Since, by Lemma~\ref{L1}, $G$ acts faithfully on each orbit, 
$\Delta_1$ is a proper subset of $\Omega$ and $\Delta_2$ is a proper subset of $\B$.
Notice that both sets $\Delta$ and $\Omega \setminus \Delta$ are 
nonempty.
To get a contradiction, we show that $R$ has no arc between $\Delta$ and $\Omega \setminus \Delta$.  Indeed, if  
$(\alpha,\beta) \in R$ for some $\alpha \in \Delta$ and $\beta \in \Omega \setminus \Delta$, then $(\alpha,\beta)^{N_\omega} \subseteq R$ implies that 
$(\alpha,\beta^{N_\omega}) \subseteq  R$. Since $N_\alpha=N_\omega$ and $N_\alpha \neq  N_\beta$, we conclude that 
$\beta^{N_\omega}=\beta^{N_\alpha}=\beta^{C_p},$ hence $\beta^{C_p} \subseteq \beta^{G_\alpha},$ contradicting Lemma~\ref{L2}. 
If  $(\alpha,\beta) \in R$ for some $\alpha \in \Omega \setminus \Delta$ and $\beta \in \Delta$, then $(\alpha,\beta)^{N_\omega} \subseteq R$ implies that 
$(\alpha^{N_\omega},\beta) \subseteq  R$. Since $N_\beta=N_\omega$ and $N_\alpha \neq  N_\beta$, we conclude that 
$\alpha^{N_\omega}=\alpha^{N_\beta}=\alpha^{C_p},$ hence $\alpha^{C_p} \subseteq \alpha^{G_\beta},$ contradicting Lemma~\ref{L2}. 

Therefore, $p=2$ and $m \in \{-1,-1 + 2^{e-1}\}$ with $e \ge 3$. 
We claim that $N = P C_{2^e}$. 
Let $L$ be the normalizer of $C_{2^e}$ in $PC_{2^e}$. It follows from 
$C_{2^e}\trianglelefteq N$ that  $N\leq L$.
It follows from $PC_{2^e} = C_{2^e} (PC_{2^e})_\omega$  and $C_{2^e} \triangleleft N\leq L \leq PC_{2^e}$ that $L = C_{2^e} L_\omega$.
By definition of $L$, we have $C_{2^e}\trianglelefteq L$. This gives us a homomorphism $\mu: L_\omega\rightarrow\Z_{2^e}^*$ defined by 
the rule 
$h^x = h^{\mu(x)}$ for $h \in C_{2^e}$ and $x \in L_\omega$. If there exists $x\in L_\omega\setminus\{1\}$ with $\mu(x)\equiv 1 ({\rm mod}\ 4)$, then a unique involution $b\in\sg{x}$ satisfies $\mu(b)\in\{1,1+2^{e-1}\}$. Now we can repeat the previous arguments for the subgroup $M=\sg{C_{2^e},b}$ which yields  $N=PC_{2^e}$ and \eqref{PP1}.

Thus we may assume that $\mu(x)\equiv -1 ({\rm mod}\ 4)$ for all $x\in L_\omega\setminus\{1\}$. But this is possible only if $|L_\omega|=2$.
Thus $L=N\cong\Z_{2^e}\rtimes\Z_2$. If $N\neq PC_{2^e}$, then there exists an element $g\in PC_{2^e}\setminus N$ which normalizes $N$. It follows from $N\cong\Z_{2^e}\rtimes\Z_2$ that all elements of $N\setminus C_{2^e}$ have order at most $4$. Since $e\geq 3$, the group $C_{2^e}$ is characteristic in $N$. Therefore $g$ normalizes $C_{2^e}$ implying $g\in L$, contrary to $L = N$. Thus $N=PC_{2^e}$ and \eqref{PP1} holds. In particular, $[P:P\cap C_{2^e}] = [PC_{2^e}:C_{2^e}]=2$.

Notice that the fact that $m\in\{-1,-1+2^{e-1}\}$ implies that the commutator $(C_{2^e} P)'$ has order $2^{e-1}\geq 4$.

Consider the group $PC_{n_2}$. It follows from $C_{2^e}\leq C_{n_2}$
that  $[P: P\cap C_{n_2}]\leq [P: P\cap C_{2^e}] = 2$. Together with $P\not\leq C_{n_2}$ we obtain that $[P: P\cap C_{n_2}] = [P: P\cap 
C_{2^e}] = 2$. Thus $|PC_{n_2}|=2n_2$. 

Since the orbits of $P C_{2^e}$ coincide with the orbits of $C_{2^e}\leq C_{n_2}$, the orbits of $PC_{n_2}$ coincide with the orbits of $C_{n_2}$. Therefore, $PC_{n_2} = C_{n_2} (PC_{n_2})_\omega$ for each $\omega\in\Omega$.
Counting orders we obtain $|(PC_{n_2})_\omega|=2$. The same counting for the group $PC_{2^e}$ yields $|(PC_{2^e})_\omega|=2$. Therefore
$(PC_{n_2})_\omega = (PC_{2^e})_\omega$. Let $b$ denote the unique involution in the group $(PC_{n_2})_\omega$. Then $x^b=x^\ell$ for every $x \in C_{n_2},$ where $\ell\in\{\pm 1,\pm 1 + n_2/2\}$. If $\ell\in\{1,1+n_2/2\}$, then $|(PC_{n_2})'|\leq 2$ contradicting 
 $|(PC_{2^e})'|\geq 4$.  Therefore 
$\ell\in\{ -1, -1+n_2/2\}$. In this case the group $PC_{n_2}\cong \Z_{n_2}\rtimes\sg{\ell}$. Therefore $PC_{n_2}\cong D_{2n_2},SD_{2n_2}$. In both cases $PC_{n_2}$ has two conjugacy classes outside $C_{n_2}$, say $Y_1$ and $Y_2$.
Since $P$ is normal in $PC_{n_2}$ and $P\not\leq C_{n_2}$, at least one of $Y_1$ and $Y_2$ is contained in $P$. If both are contained in $P$, then $P = PC_{n_2}$ or equivalently, $[P:C_{n_2}]=2$. This yields case (i). 

Assume now  that $P\neq PC_{n_2}$. Then $P$ contains just one class, say $Y_i$. Then $\sg{Y_i} \leq P < PC_{n_2}$. For any choice of $i,$  the subgroup $\sg{Y_i}$ has index two in $PC_{n_2}$.
Hence $P=\sg{Y_i}$. Analysing all possible cases we obtain part 
(ii) of our statement.
\end{proof}

\begin{thm}\label{Fit} With the notation of Theorem~\ref{main2},
either $F(G)=C,$ or $F(G) = \mathbf{O}_2(G)\times C_{2'}$ where $\mathbf{O}_2(G)$ is described in Lemma~\ref{L4}. 
\end{thm}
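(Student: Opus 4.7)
My plan is to combine Lemma~\ref{L4} with the well-known fact that in a solvable group $H$ one has $C_H(F(H)) \leq F(H)$. Since $G$ is solvable by \cite[Theorem~1.8]{M99}, we may write $F(G) = \prod_p \mathbf{O}_p(G)$, and Lemma~\ref{L4} tells us that $\mathbf{O}_p(G) \leq C$ for every odd prime divisor $p$ of $|G|$; hence the dichotomy in the statement corresponds precisely to whether or not $\mathbf{O}_2(G) \leq C$.

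If $\mathbf{O}_2(G) \leq C$, then $F(G) \leq C$; since $C$ is abelian it centralizes $F(G)$ pointwise, so $C \leq C_G(F(G)) \leq F(G)$, forcing $F(G) = C$. In the remaining case $\mathbf{O}_2(G) \not\leq C$, set $F_0 = \prod_{q \neq 2} \mathbf{O}_q(G)$, so that $F_0 \leq C_{2'}$ and $F(G) = \mathbf{O}_2(G)\cdot F_0$. The conclusion $F(G) = \mathbf{O}_2(G) \times C_{2'}$ will follow once I show $C_{2'} \leq F(G)$: then $C_{2'}$ lies in the $2'$-part of $F(G)$, namely $F_0$, forcing $F_0 = C_{2'}$, and the direct-product structure follows from the factors having coprime orders and commuting. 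By $C_G(F(G)) \leq F(G)$ it suffices to prove that $C_{2'}$ centralizes $F(G)$; since $C_{2'} \leq C$ automatically centralizes $F_0 \leq C$, this reduces to showing that $C_{2'}$ centralizes $\mathbf{O}_2(G)$.

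The hard part is this last centralization. The conjugation action of $C_{2'}$ on $\mathbf{O}_2(G)$ is a homomorphism $C_{2'} \to \aut(\mathbf{O}_2(G))$, and, because $C_{2'}\leq C$ is abelian, its image fixes every element of the cyclic subgroup $\mathbf{O}_2(G) \cap C$, which by Lemma~\ref{L4} has index $2$ in $\mathbf{O}_2(G)$. Running through the four possibilities $D_{2n_2}, SD_{2n_2}, D_{n_2}, Q_{n_2}$ and parametrizing an automorphism by its value on one chosen element outside this index-$2$ cyclic subgroup, a short computation shows that the pointwise stabilizer of this subgroup inside $\aut(\mathbf{O}_2(G))$ is always a $2$-group. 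Since $|C_{2'}|$ is odd, the action is therefore trivial. The delicate sub-case is $\mathbf{O}_2(G)\cong Q_8$, where $\aut(Q_8) \cong S_4$ itself has elements of order $3$; but the pointwise-fixing constraint on the cyclic subgroup of order $4$ cuts the relevant stabilizer down to a cyclic group of order $4$, so the uniform argument still goes through, completing the proof.
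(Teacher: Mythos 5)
Your proof is correct and follows the same skeleton as the paper's: write $F(G)=\prod_p \mathbf{O}_p(G)$, use Lemma~\ref{L4} to push all odd parts into $C$, invoke $C_G(F(G))\le F(G)$ for the solvable group $G$, and reduce the whole statement to showing that $C_{2'}$ centralizes $\mathbf{O}_2(G)$. Where you genuinely deviate is in that last centralization step. The paper splits into two cases: for $\mathbf{O}_2(G)\not\cong Q_8$ it quotes that $\aut(\mathbf{O}_2(G))$ is itself a $2$-group (via \cite[Theorem~34.8]{B08}), and for the exceptional case $\mathbf{O}_2(G)\cong Q_8$ (where $\aut(Q_8)\cong S_4$ has odd-order elements) it passes to the larger group $\mathbf{O}_2(G)C_8\cong SD_{16}$, which is normalized by every $g\in C_{2'}$ and again has $2$-group automorphism group. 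You instead give a uniform argument: Lemma~\ref{L4} guarantees in both of its cases that $\mathbf{O}_2(G)\cap C$ is a cyclic subgroup of index $2$, it is centralized by $C_{2'}$ because $C$ is abelian, and the pointwise stabilizer of an index-$2$ cyclic subgroup in $\aut(\mathbf{O}_2(G))$ is a $2$-group for all four isomorphism types --- including $Q_8$, where your computation that this stabilizer is cyclic of order $4$ is accurate. Since $C_{2'}$ has odd order, it must act trivially. Your route avoids the $SD_{16}$ detour and treats $Q_8$ on the same footing as the other cases, at the cost of a small explicit computation in the automorphism groups; the paper's route needs only the isomorphism type of $\mathbf{O}_2(G)$ and a citation, but must single out $Q_8$. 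Both arguments are valid and complete.
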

\begin{proof}

Recall that the Fitting subgroup $F(G)$ is a direct product of its subgroups ${\bf O}_p(G),$ where $p$ runs over the set of all prime divisors of $|G|$. 

It was shown before that  ${\bf O}_p(G) \leq C$ if $p > 2,$ implying that 
$F(G)$ is contained in $\mathbf{O}_2(G) \times C_{2'}$.  If $\mathbf{O}_2(G) \le C,$ then $F(G) \le C$. On the other hand, $C=C_G(C) \leq C_{G}(F(G)) \leq F(G),$ and we get $F(G)= C$.

Now, let $\mathbf{O}_2(G) \not\le C$. Then $\mathbf{O}_2(G)$ is given by parts 
(i)-(ii) of Lemma~\ref{L4}. 
We claim that $C_{2'}$ centralizes $\mathbf{O}_2(G)$. 
If $\mathbf{O}_2(G)\not\cong Q_8$, then 
$\aut(\mathbf{O}_2(G))$ is a $2$-group (see, e.\ g., 
\cite[Theorem~34.8(a) and (c)]{B08}), and the claim follows. Let now $\mathbf{O}_2(G)\cong Q_8$.  Pick an arbitrary $g \in C_{2'}$. Then $(\mathbf{O}_2(G)C_8)^g=(\mathbf{O}_2(G))^g(C_8)^g=\mathbf{O}_2(G)C_8,$ hence $g$ 
acts on $\mathbf{O}_2(G)C_8$ by conjugation as an element in $\aut (\mathbf{O}_2(G)C_8)$. 
As $\mathbf{O}_2(G)C_8 \cong SD_{16},$ $\aut (\mathbf{O}_2(G)C_8)$ is a $2$-group 
(see, e.\ g., \cite[Theorem~34.8(c)]{B08}), and we 
proved also in this case that any $g \in C_{2'}$ centralizes $\mathbf{O}_2(G)$. 
Then $C_{2'} \le C(F(G)) \le F(G),$ and so $F(G) = \mathbf{O}_2(G) \times C_{2'}$. 
\end{proof}

We are now ready to prove the main result of the paper.
\medskip

\noindent{\it Proof of Theorem~\ref{main2}.}
Recall that, $C$ is a cyclic subgroup of $\aut X$ and $G$ is a $\prec$-minimal subgroup of $\aut X$ which contains $C$.
By \cite[Theorem~1.8]{M99}, $G$ is solvable, so we can apply 
Theorem~\ref{Fit}. According to this theorem  either $F(G)=C,$ or 
$F(G) = \mathbf{O}_2(G)\times C_{2'}$ where $\mathbf{O}_2(G)$ is described in Lemma~\ref{L4}. If $F(G)=C,$ then $C$ is the only regular cyclic subgroup of $G$ implying $G=C$ (by $\prec$-minimality of $G$).

We are left with the case when $F(G)=\mathbf{O}_2(G)\times C_{2'},$ 
where $\mathbf{O}_2(G)$ is described in 
one of cases (i)-(ii) of Lemma~\ref{L4}.
If case (i) occurs, then $C$ is a unique cyclic subgroup of $F(G)$ of index two. Therefore $C$ is a unique regular cyclic subgroup of $G$ (if $\tilde{C}\leq G$ is another one, then by Lemma~\ref{L4}, $\tilde{C}\leq F(G)$).

Assume now that case (ii) of Lemma~\ref{L4} occurs. Then 
\begin{equation}\label{half}
 C_{n/2} \le F(G). 
\end{equation}
Assume first that $\mathbf{O}_2(G)\not\cong Q_8$. 
Then $C_{n_2/2}$ is a unique cyclic subgroup of $\mathbf{O}_2(G)$ of order $n_2/2$. So, in this case $C_{n/2}=C^2:=\{x^2 : x \in C\}$ is a unique cyclic subgroup of $F(G)$ of order $n/2$. Therefore, for any regular cyclic subgroup $\tilde{C}$ of $G$ it holds that $C^2 = \tilde{C}^2$. This implies that any regular cyclic subgroup of $G$ centralizes $C^2$. Take now  a subgroup $U\leq G$ generated by all regular cyclic subgroups of $G$. Then $U$ centralizes $C^2$ and $U\prec G$. By $\prec$-minimality of $G$ we obtain that $G=U$ which implies that $G\leq {\bf C}_{{\rm Sym}(\Omega)}(C^2)$.

The group $C^2$ is semiregular on both $\Omega$ and $\B$ and has two orbits on each of these sets. We denote these orbits by $\Omega_1,\Omega_2$ and $\B_1,\B_2$ respectively. Pick an arbitrary $\beta\in\B_1$. Since $k\geq 3$, the line $\beta$ is incident with at least two points $\alpha,\alpha'$ which are in the same $C^2$-orbit. 
Without loss of generality we may assume  that $\alpha,\alpha'\in\Omega_1$. Since $G$ centralizes $C^2$, the point stabilizer $G_\alpha$ fixes each point of $\Omega_1$. Hence $\alpha'^{G_\alpha}=\{ \alpha' \}$. Therefore any line $\beta'\in\beta^{G_\alpha}$ is incident with both $\alpha$ and $\alpha'$ too. Since $R$ is $K_{2,2}$-free, we obtain $\beta^{G_\alpha}=\{ \beta \}$. Therefore $G_\alpha \leq G_\beta$. Together with $|G_\alpha|=|G|/n =|G_\beta|$ we conclude that $G_\alpha = G_\beta$. Also, $G_\alpha$ fixes each element of $\B_1$ setwise, because $\B_1$ is a $C^2$-orbit of $\beta$.
If a line $\beta' \in \B_1$ is incident with only the elements of $\Omega_1$, then the lines of $\B_2$ are incident with the points of $\Omega_2$ only, contradicting connectedness of the incidence graph. 
Thus there is a point $\omega\in\Omega_2$ incident with $\beta$. Therefore the points of the set $\omega^{G_\beta}=\omega^{G_\alpha}$ are incident with
$\beta$ too.  The group $C^2$ acts transitively on $\Omega_2$ and commutes with $G_\alpha$ elementwise. Therefore, the orbits of $G_\alpha$ on $\Omega_2$ coincide with orbits of some subgroup $F\leq C^2$. Lemma~\ref{L1} implies that $F$ is trivial. But in this case $G_\alpha$ acts trivially on $\Omega$, and, therefore, is trivial. 
Hence $G=C$.

Consider now the remaining case $\mathbf{O}_2(G) \cong Q_8$. Since $F(G)_{2'}=C_{2'}$, it follows that all regular cyclic subgroups $\tilde{C}\leq G$ have the same odd part, that is, $C_{2'}=\tilde{C}_{2'}$ (see Lemma~\ref{L4}). Thus in order to prove that any two regular cyclic subgroups of $G$ are conjugate, it is
sufficient to show the conjugacy of their Sylow $2$-subgroups. Notice that, according to Lemma~\ref{L4} the Sylow $2$-subgroup of $C$ has order $8$.

Let $K\trianglelefteq G$ be the kernel of the action of $G$ on the imprimitivity system $\Omega/\mathbf{O}_2(G)$. The orbits of $\mathbf{O}_2(G)$ have the same length which should be equal to $8$ (otherwise the unique involution of $Q_8$ would act trivially). Since $C_8$ has the same orbits, we conclude $C_8\leq K$. 

Since $K$ and $\mathbf{O}_2(G)$ are normal in $G$, the 
centralizer $N=C_K(\mathbf{O}_2(G))$ is normal in $G$ too. Since $\mathbf{O}_2(G)$ acts regularly on each its orbit $\Delta\in\Omega/\mathbf{O}_2(G)$, the centralizer of $\mathbf{O}_2(G)^\Delta$ in ${\rm Sym}(\Delta)$ has order $8$. Therefore $N^\Delta$ is a $2$-group for any $\Delta\in\Omega/\mathbf{O}_2(G)$ implying that $N$ is a normal $2$-subgroup of $G$. Hence $N\leq Z(\mathbf{O}_2(G))$ implying $|N|=2$. Thus the group $K/N$ is embedded into $\aut(\mathbf{O}_2(G)) = \aut(Q_8)\cong S_4$.

As $\mathbf{O}_2(G) <  \mathbf{O}_2(G) C_8 \le K$ and 
$K$ is not a $2$-group, $|K|=48$. As $\mathbf{O}_2(G) C_8 \cong SD_{16},$ different cyclic 
subgroups of order $8$ are contained in different Sylow $2$-subgroups of $K$.  
Since all Sylow $2$-subgroups are conjugate and contain a unique cyclic subgroup of order $8$, all cyclic subgroups of order $8$ are conjugate in $K$.
 
\hfill $\square$ 

We would like to end the paper with the following problem.

\begin{prob}
Classify all finite groups which are CI-groups for balanced 
configurations.
\end{prob}

\end{document}